\documentclass[a4paper,12pt]{article}

\usepackage{amssymb}
\usepackage{epsfig}
\usepackage{amsfonts}
\usepackage{amsmath}
\usepackage{euscript}
\usepackage{amscd}
\usepackage{amsthm}
\DeclareMathAlphabet{\mathpzc}{OT1}{pzc}{m}{it}

\newtheorem{thm}{Theorem}[section]
\newtheorem{lem}[thm]{Lemma}
\newtheorem{prop}[thm]{Proposition} 
\newtheorem{cor}[thm]{Corollary}

\newcommand{\bZ}{\mathbb Z}

\newcommand{\A}{\mathbb A}
\newcommand{\V}{\mathbb V}
\newcommand{\Ga}{\mathbb G}

\newcommand{\td}{\operatorname{tr.deg}}

\newcommand{\gr}{\operatorname{gr}}
\newcommand{\dk}{\operatorname{DK}}

\setlength{\textwidth 6.5in} \setlength{\textheight 8.8in}
\voffset -0.7in \hoffset -0.6in

\title{On Zariski's Cancellation Problem in positive characteristic}
\author{Neena Gupta\\
{\small{\it Stat-Math Unit, Indian Statistical Institute}}\\
{\small{\it 203 B.T. Road, Kolkata 700108, India}}\\
{\small{\it e-mail: neenag@isical.ac.in, rnanina@gmail.com}}}

\begin{document}

\date{}
\maketitle

\begin{abstract}
In this paper we shall show that when $k$ is a field of positive characteristic
the affine space $\A^n_k$ is not cancellative for {\it any} $n \ge 3$. 
\end{abstract}

\section{Introduction}
Let $k$ be an algebraically closed field.
The Zariski Cancellation Problem for Affine Spaces 
asks whether the affine space $\A^n_k$ is cancellative, i.e.,  
if $\V$ is an affine $k$-variety such that 
$\V \times \A^1_k \cong \A^{n+1}_k$,
does it follow that $\V \cong \A_k^n$? Equivalently, 
if $A$ is an affine $k$-algebra
such that $A[X]$ is isomorphic to the polynomial ring $k[X_1, \dots, X_{n+1}]$,
does it follow that $A$ is isomorphic to $k[X_1, \dots, X_n]$?

The affine line $\A^1_k$ was shown to be cancellative by 
S. S. Abhyankar, P. Eakin and W. J. Heinzer
(\cite{AEH}) and the affine plane $\A^2_k$ was shown to be cancellative by 
T. Fujita, M. Miyanishi and T. Sugie
(\cite{F}, \cite{MS}) in characteristic zero and 
by P. Russell (\cite{R}) in positive characteristic. 
However, in \cite{G}, the author showed when ch. $k>0$,
the affine space $\A^3_k$ is not cancellative by proving that
a threefold constructed by Asanuma in \cite{A} is  
not isomorphic to the polynomial ring $k[X_1, X_2, X_3]$.  

In this paper, we shall show when ch. $k>0$,
the affine space $\A^n_k$ is not cancellative  
for any $n\ge 3$ (Theorem \ref{zar}). 
This completely settles the Zariski's Cancellation problem 
in positive characteristic.

\section{Preliminaries}\label{pri}
We shall use the notation $R^{[n]}$ 
for a polynomial ring in $n$ variables over a ring $R$.

We shall also use the following term from affine algebraic geometry.
\smallskip

\noindent
{\bf Definition.} 
An element $f \in k[Z, T]$ is called a {\it line} if $k[Z, T]/(f) = k^{[1]}$.
A line $f$ is called  a {\it non-trivial line} if $k[Z, T]\neq k[f]^{[1]}$.

We recall the definition of an exponential map 
(a formulation of the concept of $\Ga_a$-action)
and associated invariants.

\smallskip

\noindent
{\bf Definition.}
Let $A$ be a $k$-algebra and let $\phi: A \to A^{[1]}$ be a 
$k$-algebra homomorphism. For an indeterminate $U$ over $A$, let the notation 
$\phi_U$ denote the map $\phi: A \to A[U]$.  
$\phi$ is said to be an {\it exponential map on $A$} if $\phi$ satisfies the following two properties:
\begin{enumerate}
 \item [\rm (i)] $\varepsilon_0 \phi_U$ is identity on $A$, where 
 $\varepsilon_0: A[U] \to A$ is the evaluation at $U = 0$.
 \item[\rm (ii)] $\phi_V \phi_U = \phi_{V+U}$, where 
 $\phi_V: A \to A[V]$ is extended to a homomorphism 
 $\phi_V: A[U] \to A[V,U]$ by  setting $\phi_V(U)= U$.
 \end{enumerate}
The ring of $\phi$-invariants of an exponential map $\phi$ on $A$
is a subring of $A$ given by 
$$
A^{\phi} = \{a \in A\,| \,\phi (a) = a\}.
$$
An exponential map $\phi$ is said to be {\it non-trivial} if $A^{\phi} \neq A$.
For an affine domain $A$ over a field $k$, let 
${\rm EXP} (A)$ denote the set of all exponential maps on $A$.
The {\it Derksen invariant} of $A$ is a subring of $A$ defined by
$$
{\rm DK} (A) = k[ f \,| \, f \in A^{\phi}, \phi ~\text{a non-trivial exponential map}].
$$ 


We recall below a crucial observation (cf. \cite[Lemma 2.4]{G}).

\begin{lem}\label{r1}
Let $k$ be a field and $A= k^{[n]}$, where $ n > 1$. Then ${\rm DK} (A) = A$.
\end{lem}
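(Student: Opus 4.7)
The plan is to exhibit, for each coordinate, a nontrivial exponential map whose invariant ring is large enough that together they generate all of $A$.

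Write $A = k[X_1, \dots, X_n]$ and fix an index $i \in \{1, \dots, n\}$. I would define $\phi_i : A \to A[U]$ to be the unique $k$-algebra homomorphism sending $X_i \mapsto X_i + U$ and $X_j \mapsto X_j$ for $j \neq i$. The verification that $\phi_i$ is an exponential map is routine: setting $U=0$ recovers the identity on $A$, and the additive cocycle condition $\phi_{i,V}\phi_{i,U} = \phi_{i,V+U}$ holds because both sides send $X_i \mapsto X_i + U + V$ and fix the other generators. Nontriviality is immediate, for instance because $\phi_i(X_i) \neq X_i$.

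Next I would identify the invariant ring. Any $f \in A$ can be uniquely written as a polynomial in $X_i$ with coefficients in $B_i := k[X_1, \dots, \hat{X_i}, \dots, X_n]$, and $\phi_i(f)$ is obtained by substituting $X_i + U$ for $X_i$. Comparing coefficients of powers of $U$ shows that $\phi_i(f) = f$ forces $f$ to be constant in $X_i$, so
\[
A^{\phi_i} = k[X_1, \dots, \hat{X_i}, \dots, X_n] = B_i.
\]
In particular $B_i \subseteq \dk(A)$ for each $i$.

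Finally, since $n > 1$, every generator $X_j$ lies in $B_i$ for any choice of $i \neq j$, which exists. Hence $\{X_1, \dots, X_n\} \subseteq \dk(A)$, and because $\dk(A)$ is a $k$-subalgebra of $A$ containing all the generators we conclude $\dk(A) = A$.

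The only subtlety is the hypothesis $n > 1$: the argument breaks when $n = 1$ because then there is no auxiliary variable to translate, and in fact the only exponential map on $k^{[1]}$ is the trivial one, so $\dk(k^{[1]}) = k$. Beyond noting this, the proof is essentially a direct construction, with no real obstacle.
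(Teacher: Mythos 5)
Your construction is correct and is essentially the standard proof of this lemma (the paper only recalls it from \cite[Lemma 2.4]{G}, but the same translation idea underlies the proof of Lemma \ref{dkA} here): the coordinate translations $X_i \mapsto X_i + U$ are nontrivial exponential maps with invariant rings $k[X_1,\dots,\hat{X_i},\dots,X_n]$, whose union generates $A$ once $n>1$, and your comparison of coefficients of powers of $U$ (in particular the top power) correctly pins down the invariant ring even in positive characteristic, where a naive derivative argument would not suffice. One inaccuracy in your closing aside, which does not affect the lemma itself: it is false that the only exponential map on $k^{[1]}$ is the trivial one --- $X \mapsto X + U$ is a nontrivial exponential map on $k[X]$ --- rather, every nontrivial exponential map on $k[X]$ has invariant ring $k$ (the invariants form an algebraically closed subring of transcendence degree $0$), and that is the reason ${\rm DK}(k^{[1]}) = k$.
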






We shall also use the following result proved in \cite[Lemma 3.3]{G}.

\begin{lem}\label{lem1}
Let $B$ be an affine domain over an infinite field $k$. 
Let $f \in B$ be such that  $f - \lambda $ is a prime element of $B$ 
for infinitely many $\lambda \in k$.  Let $\phi$ be
a non-trivial exponential map on $B$ such that
$f \in B^{\phi}$. Then there exist infinitely many $\beta \in k$ such that each $f - \beta$ 
is a prime element of $B$ and $\phi$
induces a non-trivial exponential map  $\hat{\phi}$ on $B/(f - \beta)$. 
Moreover, $B^{\phi}/(f - \beta)B^{\phi}$ is contained in $(B/(f - \beta))^{\hat{\phi}}$. 
\end{lem}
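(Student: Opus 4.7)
My plan is to proceed in three stages: show $\phi$ descends modulo $(f-\beta)$ to an exponential map $\hat\phi$ on the quotient, prove $\hat\phi$ is non-trivial for cofinitely many admissible $\beta$, and check the invariant-ring containment. First, because $\phi$ is a $k$-algebra homomorphism with $\phi(f)=f$, I get $\phi(f-\beta)=f-\beta$ for every $\beta\in k$; hence $\phi$ sends the ideal $(f-\beta)B$ into $(f-\beta)B[U]$ and descends to a well-defined $k$-algebra homomorphism $\hat\phi\colon B/(f-\beta)B\to (B/(f-\beta)B)[U]$. Both exponential axioms (i) and (ii) pass to $\hat\phi$ via the universal property of the quotient, so $\hat\phi$ is an exponential map for every $\beta$ in our infinite set (where the quotient is additionally a domain).

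For non-triviality, I would start from an $a\in B$ with $\phi(a)\neq a$ (which exists since $\phi$ is non-trivial) and write $\phi(a)-a=\sum_{i\geq 1}a_iU^i\in B[U]$, fixing the maximal $N\geq 1$ with $a_N\neq 0$. Then $\hat\phi$ is non-trivial on $B/(f-\beta)B$ as soon as $\hat\phi(\bar a)\neq \bar a$, which in turn holds whenever $a_N\notin (f-\beta)B$. The heart of the argument, and the main obstacle, is to show that this failure occurs for only finitely many admissible $\beta$. Here I would invoke the standard fact that in the Noetherian domain $B$ (affine over $k$), the nonzero element $a_N$ lies in only finitely many height-one prime ideals, namely the minimal primes over $(a_N)$, whose finiteness follows from Noetherian primary decomposition together with Krull's principal ideal theorem. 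Since each $(f-\beta)B$ for $\beta$ in the given infinite set is a distinct height-one prime of $B$ (nonzero, proper, principal, and prime in a domain), $a_N$ can belong to at most finitely many of them, giving non-triviality of $\hat\phi$ for all but finitely many admissible $\beta$.

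Finally, for the ``moreover'' part, the containment $f-\beta\in B^{\phi}$ shows that the composition $B^{\phi}\hookrightarrow B\twoheadrightarrow B/(f-\beta)B$ kills $(f-\beta)B^{\phi}$ and so factors through $B^{\phi}/(f-\beta)B^{\phi}$. Any $a\in B^{\phi}$ satisfies $\phi(a)=a$, which descends to $\hat\phi(\bar a)=\bar a$; consequently the image of this factorization lies inside $(B/(f-\beta)B)^{\hat\phi}$, giving the stated inclusion. The only real content is the finite-primes step above; everything else is a formal passage to the quotient.
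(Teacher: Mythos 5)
Your argument is correct; note that the paper itself gives no proof of this lemma, merely citing it from \cite{G}, and your reasoning (descend $\phi$ using $\phi(f-\beta)=f-\beta$, then pick $a$ with $\phi(a)\neq a$ and a nonzero coefficient $a_N$ of $\phi(a)-a$, and use that $a_N$ lies in only finitely many of the pairwise distinct height-one primes $(f-\beta)B$) is exactly the standard argument given there. The only points you leave implicit --- that the ideals $(f-\beta)B$ are pairwise distinct (else a nonzero scalar $\beta-\beta'$ would lie in a proper ideal) and that the stated containment is really about the image of $B^{\phi}$ in the quotient --- are routine.
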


We shall also use the following result proved in \cite[Theorem 3.7]{G2}.

\begin{thm}\label{dka}
Let $k$ be a field and $A$ be an integral domain defined by 
$$
A= k[X,Y,Z,T]/(X^m Y - F(X, Z, T)), {\text{~~where~~}} m > 1.
$$
Set $f(Z, T):= F(0, Z, T)$. Let $x$, $y$, $z$ and $t$ denote, 
respectively, the images of $X$, $Y$, $Z$ and $T$ in $A$. 
Suppose that $\dk(A) \neq k[x,z,t]$. 
Then the following statements hold.
\begin{enumerate}
\item [\rm (i)] There exist $Z_1, T_1 \in k[Z, T]$ and $a_0, a_1 \in k^{[1]}$ such that
$k[Z, T]=k[Z_1, T_1]$ and $f(Z, T) = a_0(Z_1) + a_1(Z_1)T_1$. 
\item [\rm(ii)] If $k[Z, T]/(f)= k^{[1]}$, then $k[Z, T]=k[f]^{[1]}$.
\end{enumerate}
\end{thm}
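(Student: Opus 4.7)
The plan is to extract structural information from a non-trivial exponential map $\phi$ on $A$ witnessing $\dk(A)\neq k[x,z,t]$, push it down to a map on $A/xA$, and convert this into the required form of $f$.

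First I would establish the technical heart of the argument: that $x\in A^{\phi}$ for any such $\phi$. The standard device is to equip $A$ with a suitable $\bZ$-filtration compatible with the defining relation $x^m y = F(x,z,t)$ and to pass to the associated graded exponential map $\gr(\phi)$. The fact that $m>1$, combined with $f(Z,T)$ being the lowest $X$-degree part of $F$, forces the leading form of the relation in $\gr(A)$ to be $\bar x^m\,\bar y = f(\bar z,\bar t)$; a homogeneity argument on $\gr(\phi)$ then shows $\bar x$ (hence $x$) must be invariant. This is the Makar--Limanov / Derksen type argument employed in \cite{G}.

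Once $x\in A^{\phi}$, Lemma \ref{lem1} applies (after enlarging $k$ to its algebraic closure if $k$ is finite, and descending afterwards by faithful flatness): $x-\lambda$ is prime in $A$ for infinitely many $\lambda\in k$, and $\phi$ descends to a non-trivial exponential map $\hat{\phi}$ on $A/xA\cong (k[Z,T]/(f))[Y]$. Setting $D:=k[Z,T]/(f)$, the existence of a non-trivial exponential map on $D[Y]$ is a strong constraint: either $\hat{\phi}$ restricts nontrivially to $D$, giving the affine curve $\Sp D$ a $\Ga_a$-action, or $\hat{\phi}$ acts across $Y$ over a subring of $D$. In either case, a careful analysis of the invariant ring, of the type developed in \cite{G}, shows the curve $\Sp(k[Z,T]/(f))$ must be "rectifiable" in the affine plane in such a way that $f$ becomes linear in one coordinate of a new polynomial system, i.e.\ there exist $Z_1,T_1$ with $k[Z,T]=k[Z_1,T_1]$ and $f = a_0(Z_1)+a_1(Z_1)T_1$. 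This is (i).

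For (ii), assume further $k[Z,T]/(f)\cong k^{[1]}$. Writing $k[Z,T]/(f)=k[Z_1,T_1]/(a_0(Z_1)+a_1(Z_1)T_1)$, integrality forces $\gcd(a_0,a_1)=1$ in $k[Z_1]$. If $a_1$ were nonconstant, the projection $\Sp(k[Z,T]/(f))\to \Sp k[Z_1]$ would miss every zero of $a_1$ (since $a_0$ is nonzero there by coprimality), contradicting $\Sp(k[Z,T]/(f))\cong \A^1_k$; hence $a_1\in k^{\times}$. Then $T_1=(f-a_0(Z_1))/a_1$, so $k[Z,T]=k[Z_1,T_1]=k[Z_1,f]=k[f]^{[1]}$. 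I expect the principal obstacle to be the first step, namely showing $x\in A^{\phi}$: this requires a carefully chosen $\bZ$-filtration and relies crucially on the hypothesis $m>1$, since without it the relation in $\gr(A)$ no longer has enough rigidity to pin down $\bar x$.
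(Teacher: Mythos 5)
The paper does not actually prove Theorem \ref{dka}: it is imported wholesale from \cite[Theorem 3.7]{G2}, and the only internal evidence of the intended argument is the proof of its generalization, Proposition \ref{main}. Measured against that, your proposal for (i) breaks down at its first and central step. The claim that $x\in A^{\phi}$ for \emph{any} non-trivial exponential map $\phi$ witnessing $\dk(A)\neq k[x,z,t]$ is false. Take $F(X,Z,T)=Z$; then $A=k[X,Y,Z,T]/(X^mY-Z)\cong k[x,y,t]=k^{[3]}$ with $z=x^my$, so $\dk(A)=A\neq k[x,z,t]$, and the exponential map $\phi(x)=x+U$, $\phi(y)=y$, $\phi(t)=t$ satisfies $A^{\phi}=k[y,t]\not\subseteq k[x,z,t]$ while $x\notin A^{\phi}$. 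The element that the filtration argument actually forces into the ring of invariants of the induced homogeneous map is $y$, not $x$: one picks $g\in A^{\phi}\setminus k[x,z,t]$, observes that $g$ must contain a monomial involving $y$, and with weight $-1$ on $x$ the leading form of $g$ is divisible by $\bar{y}$, whence $\bar{y}\in \gr(A)^{\bar{\phi}}$ by factorial closedness --- exactly as in the proof of Proposition \ref{main}. Your homogeneity argument cannot ``pin down $\bar{x}$'' because nothing pins it down.

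Two further steps would fail even if the first were repaired. Lemma \ref{lem1} yields a non-trivial induced exponential map on $A/(x-\beta)$ only for infinitely many \emph{unspecified} $\beta$; it never lets you choose $\beta=0$, which is the only value carrying information (for $\beta\neq 0$ the quotient is $k^{[2]}$ and admits non-trivial exponential maps for free). More decisively, the conclusion you try to extract from a non-trivial exponential map on $D[Y]$ with $D=k[Z,T]/(f)$ is vacuous: $Y\mapsto Y+U$ is always such a map, for every $f$ whatsoever, so no constraint on $f$ --- let alone the normal form $f=a_0(Z_1)+a_1(Z_1)T_1$ --- can follow. That normal form is the entire content of (i), and your proposal defers it to ``a careful analysis of the type developed in \cite{G}'', which is precisely what a proof would have to supply. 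Part (ii) of your proposal is essentially the argument the paper gives for Proposition \ref{main}(ii) (you should also dispose of the case $a_1=0$, and argue via units or over $\bar{k}$ when $a_1$ is nonconstant without $k$-rational zeros), but it is downstream of the unproved part (i).
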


An exponential map $\phi$ on a graded ring $A$ is said to be {\it homogeneous}
if $\phi: A \to A[U]$ becomes homogeneous 
when $A[U]$ is given a grading induced from $A$ such that $U$ is a homogeneous element. 


We state below a result on homogenization of exponential maps due to 
H. Derksen, O. Hadas and L. Makar-Limanov (\cite{DHM}, cf. \cite[Theorem 2.3]{G}).

\begin{thm}\label{MDH}
Let $A$ be an affine domain over a field $k$ with an admissible 
proper $\bZ$-filtration and $\gr(A)$ the induced $\bZ$-graded domain.
Let $\phi$ be a non-trivial exponential map on $A$.
Then $\phi$ induces a non-trivial homogeneous exponential map $\bar{\phi}$ on 
$\gr (A)$ such that $\rho( {A^{\phi}}) \subseteq {\gr (A)}^{\bar{\phi}}$.
\end{thm}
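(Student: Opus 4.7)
The plan is to extend the $\bZ$-filtration on $A$ to a compatible $\bZ$-filtration on $A[U]$ by assigning $U$ a carefully chosen weight $w$, so that $\phi: A \to A[U]$ becomes a filtered $k$-algebra homomorphism. Passing to the associated graded, I obtain a $k$-algebra map $\bar\phi: \gr(A) \to \gr(A[U]) \cong \gr(A)[U]$ in which $U$ is homogeneous of degree $w$; this $\bar\phi$ is homogeneous by construction. The exponential-map axioms for $\bar\phi$ should then descend from those of $\phi$, and the inclusion $\rho(A^{\phi}) \subseteq \gr(A)^{\bar\phi}$ should follow from the compatibility of $\phi$ with the filtration.

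To choose $w$, I would fix a finite generating set $a_1, \dots, a_s$ of $A$ as a $k$-algebra and write $\phi(a_j) = a_j + \sum_{i \ge 1} a_{j,i} U^i$ in $A[U]$. In order for $\phi$ to send the filtration piece $F_d A$ into the corresponding piece of $A[U]$, the weight of $U$ must satisfy $\w(a_{j,i}) + i w \le \w(a_j)$ for every $j$ and every $i \ge 1$ with $a_{j,i}\ne 0$. I would take
\[
w \;=\; \min_{j,\,i\,:\,a_{j,i}\ne 0}\frac{\w(a_j) - \w(a_{j,i})}{i},
\]
a finite rational number whose denominators can be cleared by rescaling the filtration. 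With this $w$, the map $\phi$ is filtered, so passing to the associated graded yields a graded $k$-algebra homomorphism $\bar\phi: \gr(A) \to \gr(A)[U]$. The identity-at-$U=0$ axiom and the co-associativity axiom for $\bar\phi$ follow by taking leading forms on both sides of the corresponding identities for $\phi$, using that the evaluation $\varepsilon_0$ and the iterated composition $\phi_V \phi_U$ are themselves filtered in the induced gradings.

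The main obstacle is ensuring that $\bar\phi$ is \emph{non-trivial}: a priori, every $\phi(a) - a$ could have strictly smaller filtration degree than $a$, in which case $\bar\phi$ would reduce to the identity. This is precisely why $w$ is chosen to attain the minimum above rather than merely to bound it: by construction there must exist some generator $a_j$ and some $i \ge 1$ with $\w(a_{j,i}) + i w = \w(a_j)$, so the leading form of $\phi(a_j)$ genuinely contains a nonzero $U^i$-term, and therefore $\bar\phi(\rho(a_j)) \ne \rho(a_j)$. Finally, for $a \in A^{\phi}$ the equality $\phi(a) = a$ has leading form $\rho(a)$ on each side, whence $\bar\phi(\rho(a)) = \rho(a)$, establishing $\rho(A^{\phi}) \subseteq \gr(A)^{\bar\phi}$.
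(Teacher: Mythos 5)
This theorem is not proved in the paper at all: it is quoted from Derksen--Hadas--Makar-Limanov \cite{DHM} (cf.\ \cite[Theorem 2.3]{G}), so there is no internal proof to compare against. Your argument is essentially the standard proof from those sources: give $U$ the extremal weight $w$ that makes $\phi$ filtered, pass to the associated graded, and use the fact that the minimum is attained to force non-triviality of $\bar{\phi}$; the verification of the exponential-map axioms and of $\rho(A^{\phi})\subseteq \gr(A)^{\bar{\phi}}$ by taking leading forms is correct. The one step you gloss over is the passage from the generators to all of $A$: for an arbitrary proper filtration, the inequality $\w(a_i)+iw\le \w(a)$ for the coefficients of $\phi(a)=\sum_i a_iU^i$ does \emph{not} follow from the corresponding inequality on a generating set, and this is precisely where the two hypotheses you never invoke must enter. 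Admissibility guarantees that every element of $A_n$ is a sum of monomials in the distinguished generators, each monomial already lying in $A_n$; and the assumption that $\gr(A)$ is a domain makes the degree function additive on products, so that for such a monomial $M=\prod_l a_{j_l}$ one gets $\phi(M)=\prod_l\phi(a_{j_l})\in A[U]_{\sum_l\w(a_{j_l})}=A[U]_{\w(M)}$. With that spelled out (together with the routine observation that rescaling the filtration by a positive integer to clear the denominator of $w$ changes neither $\gr(A)$, nor admissibility, nor homogeneity), your proof is complete and agrees with the argument in the cited references.
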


\section{Main Theorems}

Throughout the paper, $k$ will denote a field 
(of any characteristic unless otherwise specified) and 
$A$ an integral domain defined by 
$$
A= k[X_1, \dots, X_{m},Y,Z,T]/({X_1}^{r_1} \cdots {X_m}^{r_m} Y - F(X_1, \dots, X_m, Z, T)),
$$
where $r_i >1$ for each $i$, $1\le i\le m$. 
The images of $X_1$, $\dots$, $X_m$, $Y$, $Z$ and $T$ in $A$ will be denoted by
$x_1$, $\dots$, $x_m$, $y$, $z$ and $t$ respectively. 

Set $B: = k[x_1,\dots, x_m, z, t] (= k^{[m+2]})$.
We note that $B \hookrightarrow A \hookrightarrow B[({x_1\cdots x_m})^{-1}]$.
For each $m$-tuple $(q_1, \dots, q_m) \in \bZ^m$, 
consider the $\bZ$-grading on $B[({x_1\cdots x_m})^{-1}]$
given by 
$$
B[({x_1\cdots x_m})^{-1}]= \bigoplus_{i \in \bZ}B_i, \text{~where~}
B_i = \bigoplus_{(i_1, \dots, i_m) \in \bZ^m, q_1i_1+q_2i_2+ \dots+ q_mi_m = i}k[z, t]{x_1}^{i_1}{x_2}^{i_2}\dots{x_m}^{i_m}.
$$
Each element $a \in B[({x_1\cdots x_m})^{-1}]$ can be uniquely written as 
$a= \sum_{{\ell_a} \le j\le {u_a}} a_j$, where $a_j \in B_j$. 
(Note that if $a \in B$ then $a_j \in B$ for each $j$, ${\ell_a} \le j\le {u_a}$.)
We call $u_a$ the degree of $a$ and $a_{u_a}$ the leading homogeneous summand of $a$.
 
Using this grading on $B[({x_1\cdots x_m})^{-1}]$, we can define a proper
$\bZ$-filtration $\{A_n\}_{n \in \bZ}$ on $A$
by setting $A_n := A \cap \bigoplus_{i \le n}B_i$. 
Then $x_j \in A_{q_j}\setminus A_{q_j-1}$, $1\le j\le m$ 
and $z, t \in A_{0}\setminus A_{-1}$. 
Since $A$ is an integral domain, we have $F \neq 0$ and hence $F_{u_F} \neq 0$. 
Thus, $y \in A_b\setminus A_{b-1}$, where 
$b= u_F-(q_1r_1+ \cdots + q_mr_m)$. 
Let $\gr{A}$ denote the induced graded ring 
$\bigoplus_{n \in \bZ}A_{n}/ A_{n-1}$. 

Note that each element $h \in A$ can be uniquely written as sum of monomials of the form 
\begin{equation}\label{eqn}
{x_1}^{i_1} \cdots {x_m}^{i_m} z^{j_1}t^{j_2}y^{\ell}, 
\end{equation}
where $i_j \ge 0$, $1\le j \le m$ and $\ell \ge 0$ satisfying  
that if $\ell > 0$ then $i_{s} < r_s$ for at least one $s$, $1\le s \le m$. 
Therefore, it can be seen that, if 
$h \in A_{n}\setminus A_{n-1}$ then $h$ can be uniquely written as 
sum of monomials of the form as in equation (\ref{eqn})
and each of these monomials lies  in $A_n$.   
Thus, the filtration defined on $A$ is admissible with the
generating set $\Gamma := \{x_1, \dots, x_m, y, z, t\}$
and so $\gr{A}$ is generated by image of $\Gamma$ in $\gr{A}$
(cf. \cite[Remark 2.2 (2)]{G}).

We now exhibit a structure of $\gr{A}$ when $F_{u_F}$ is not divisible by $x_j$ for any $j$. 

\begin{lem}\label{fil}
Suppose that $F_{u_F}$ is not divisible by $x_j$ for any $j$, $1\le j\le m$, 
then $\gr{A}$ is isomorphic to 
$$
D = \dfrac{k[X_1, \dots, X_m,Y,Z,T]}{({X_1}^{r_1} \cdots {X_m}^{r_m} Y - F(X_1, \dots, X_m, Z, T)_{u_F})}.
$$ 
\end{lem}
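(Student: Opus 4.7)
The plan is to construct an explicit graded $k$-algebra isomorphism $\psi : D \to \gr A$ using the natural generators on each side. First I would define a $k$-algebra homomorphism $\psi_0 : k[X_1,\dots,X_m,Y,Z,T] \to \gr A$ by sending $X_j$ to the class of $x_j$ in $A_{q_j}/A_{q_j-1}$, sending $Y$ to the class of $y$ in $A_b/A_{b-1}$, and sending $Z, T$ to the classes of $z, t$ in $A_0/A_{-1}$ (the filtration layers that the setup has already pinned down). I would then verify that $\psi_0$ annihilates $X_1^{r_1}\cdots X_m^{r_m} Y - F(X_1,\dots,X_m,Z,T)_{u_F}$: from the defining relation $x_1^{r_1}\cdots x_m^{r_m}\, y = F$ in $A$ and the definition of the leading homogeneous summand, the element $x_1^{r_1}\cdots x_m^{r_m}\, y - F_{u_F}(x_1,\dots,x_m,z,t) = F - F_{u_F}$ lies in $A_{u_F-1}$, so its image in $(\gr A)_{u_F} = A_{u_F}/A_{u_F-1}$ vanishes. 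Hence $\psi_0$ descends to a graded $k$-algebra map $\psi : D \to \gr A$.

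Surjectivity of $\psi$ is then immediate from admissibility of the filtration, which forces $\gr A$ to be generated as a $k$-algebra by the images of $\{x_1,\dots,x_m,y,z,t\}$. For injectivity I would use a monomial comparison. By the description recorded just before the statement, the constrained monomials $x_1^{i_1}\cdots x_m^{i_m} z^{j_1} t^{j_2} y^{\ell}$ (with $i_s < r_s$ for some $s$ whenever $\ell > 0$) form a $k$-basis of $A$, and each lies in a single filtration layer, so their images constitute a $k$-basis of $\gr A$. On the $D$-side, the relation $X_1^{r_1}\cdots X_m^{r_m} Y = F(X_1,\dots,X_m,Z,T)_{u_F}$ lets one rewrite every monomial with $\ell > 0$ and all $i_s \ge r_s$ as a polynomial of strictly smaller $Y$-degree; iterating (termination by descent in total $Y$-degree) shows that the analogous uppercase constrained monomials $k$-span $D$. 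Since $\psi$ sends each uppercase constrained monomial to its lowercase counterpart, it carries a spanning set of $D$ bijectively onto a linearly independent set of $\gr A$, which forces $\psi$ to be injective.

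The main obstacle is careful bookkeeping rather than a deep new idea. One has to check that $\psi_0$ sends $F(X_1,\dots,X_m,Z,T)_{u_F}$ to the class of $F_{u_F}(x_1,\dots,x_m,z,t)$ in $A_{u_F}/A_{u_F-1}$, which uses that every monomial of $F_{u_F}$ is homogeneous of total weight $u_F$ in the ambient $\bZ$-grading on $B[(x_1\cdots x_m)^{-1}]$ and hence survives cleanly in the graded quotient. The hypothesis that $F_{u_F}$ is not divisible by any $x_j$ enters implicitly on the $D$-side: it is what ensures that $X_1^{r_1}\cdots X_m^{r_m} Y - F_{u_F}$ is a non-degenerate relation, so that the uppercase constrained monomials are not merely a spanning set of $D$ but are in honest relation-preserving bijection with the basis of $\gr A$.
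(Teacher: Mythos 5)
Your overall architecture (a well-defined graded surjection $\psi\colon D \to \gr{A}$, then injectivity) is sound, and the well-definedness and surjectivity parts are unproblematic. The gap is in the injectivity step, specifically in the sentence asserting that the images of the constrained monomials ``constitute a $k$-basis of $\gr{A}$.'' Spanning is immediate, but \emph{linear independence} of these images is exactly the content of the lemma, and it is precisely where the hypothesis that no $x_j$ divides $F_{u_F}$ must enter --- not, as you claim, on the $D$-side, where you only use that the uppercase constrained monomials span $D$, which holds unconditionally. As written, your argument never genuinely invokes the hypothesis, yet the statement fails without it: take $m=1$, $r_1=2$, $F=X^2Z+T$ and weight $q_1=1$, so that $F_{u_F}=X^2Z$ is divisible by $X$. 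Then $A\cong k[x,y,z]$ with $t=x^2(y-z)$, and $y-z=tx^{-2}\in A_{-2}$, so $\gr(y)=\gr(z)$: the images of the two distinct constrained monomials $y$ and $z$ coincide in $\gr{A}$. Indeed here $D=k[X,Y,Z,T]/(X^2(Y-Z))$ is not even a domain, whereas $\gr{A}$ always is one (it embeds in $B[(x_1\cdots x_m)^{-1}]$). The same example shows that the admissibility statement recorded before the lemma (``each of these monomials lies in $A_n$'') cannot be cited as an unconditional fact; it too depends on the hypothesis on $F_{u_F}$, so leaning on it here is circular.

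To repair your route you would have to prove the linear independence directly: a relation $\sum c_M M\in A_{n-1}$ among degree-$n$ constrained monomials forces the leading homogeneous summands to cancel in $B[(x_1\cdots x_m)^{-1}]$, where the leading summand of $x_1^{i_1}\cdots x_m^{i_m}z^{j_1}t^{j_2}y^{\ell}$ is $x_1^{i_1-r_1\ell}\cdots x_m^{i_m-r_m\ell}z^{j_1}t^{j_2}F_{u_F}^{\ell}$. Clearing denominators and isolating the top $y$-degree $L$ gives $x_1^{r_1}\cdots x_m^{r_m}\mid P_L F_{u_F}^{L}$ in the UFD $B$ (with $P_L$ the coefficient of $F_{u_F}^L$); since $x_j\nmid F_{u_F}$ one gets $x_1^{r_1}\cdots x_m^{r_m}\mid P_L$, which contradicts the constraint that each monomial of $P_L$ has $i_s<r_s$ for some $s$ when $L>0$, and a descending induction finishes. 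That is where the hypothesis actually lives. For comparison, the paper sidesteps all of this bookkeeping: the hypothesis makes the relation $X_1^{r_1}\cdots X_m^{r_m}Y-F_{u_F}$ irreducible, hence $D$ an $(m+2)$-dimensional domain, while $\gr{A}$ contains the algebraically independent elements $\gr(x_1),\dots,\gr(x_m),\gr(z),\gr(t)$, so the surjection $D\to\gr{A}$ cannot have a nonzero (prime) kernel.
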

\begin{proof}
For $a \in A$, let $\gr(a)$ denote the image of $a$ in $\gr{A}$.
Then, as discussed above, $\gr{A}$ is generated by $\gr(x_1)$, $\dots$, 
$\gr(x_m)$,  $\gr(y)$, $\gr(z)$ and $\gr(t)$.
Note that if $a \in B (\subseteq A)$, then $\gr(a) = \gr(a_{u_a})$. 
As ${x_1}^{r_1} \cdots {x_m}^{r_m}{y} (= F)\in A_{u_F} \setminus A_{u_F-1}$   
and hence ${x_1}^{r_1} \cdots {x_m}^{r_m}{y}- F(x_1, \dots, x_m, z, t)_{u_F} \in A_{u_F-1}$. Therefore, 
$$
\gr({x_1})^{r_1} \cdots \gr({x_m})^{r_m}\gr({y})- \gr(F_{u_F}) =0  {\text{~~in~~}} \gr{A}.$$
Since $F_{u_F}$ is not divisible by $x_j$ for any $j$, 
$1\le j\le m$, $D$ is an integral domain. 
As $\gr{A}$ can be identified with a subring of 
$\gr (B[({x_1\cdots x_m})^{-1}])  \cong B[({x_1\cdots x_m})^{-1}]$, 
we see that the elements $\gr(x_1),\dots, \gr(x_m),\gr(z),\gr(t)$ of $\gr{A}$
are algebraically independent over $k$. Hence $\gr{A} \cong D$. 
\end{proof}

\begin{lem}\label{dkA}
We have $B (=k[x_1, \dots, x_m, z, t])\subseteq \dk(A)$.
\end{lem}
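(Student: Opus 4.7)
The plan is to exhibit two non-trivial exponential maps $\phi_1,\phi_2$ on $A$ whose fixed rings together contain every generator of $B$. I would define $\phi_1:A\to A[U]$ on the generators of $A$ by requiring it to fix $x_1,\ldots,x_m$ and $t$, to send $z\mapsto z+x_1^{r_1}\cdots x_m^{r_m}U$, and to send $y$ to the unique value compatible with the defining relation, namely
\[
\phi_1(y)\;=\;\frac{F(x_1,\ldots,x_m,\,z+x_1^{r_1}\cdots x_m^{r_m}U,\,t)}{x_1^{r_1}\cdots x_m^{r_m}}.
\]
The map $\phi_2$ is the analogous construction with the roles of $z$ and $t$ interchanged.

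The only serious point is to check that $\phi_1(y)$ genuinely lies in $A[U]$, and not merely in a localisation. This is a purely polynomial statement: for any $F\in k[X_1,\ldots,X_m,Z,T]$ and any auxiliary indeterminate $W$, the difference $F(X_1,\ldots,X_m,Z+W,T)-F(X_1,\ldots,X_m,Z,T)$ is divisible by $W$ in $k[X_1,\ldots,X_m,Z,T,W]$ (term by term, from the binomial expansion of $(Z+W)^j$, which is valid in any characteristic). Specialising $W=X_1^{r_1}\cdots X_m^{r_m}U$ produces a polynomial $H_1\in k[X_1,\ldots,X_m,Z,T,U]$ with
\[
F(X_1,\ldots,X_m,Z+X_1^{r_1}\cdots X_m^{r_m}U,T)=F(X_1,\ldots,X_m,Z,T)+X_1^{r_1}\cdots x_m^{r_m}\,H_1.
\]
Passing to $A[U]$ and dividing by the nonzerodivisor $x_1^{r_1}\cdots x_m^{r_m}$ gives $\phi_1(y)=y+H_1(x_1,\ldots,x_m,z,t,U)\in A[U]$. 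With this formula in place, the defining relation $x_1^{r_1}\cdots x_m^{r_m}Y-F$ is preserved, so $\phi_1$ descends to a well-defined $k$-algebra homomorphism $A\to A[U]$.

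The exponential-map axioms are routine on the generators: $\varepsilon_0\phi_1$ is the identity because setting $U=0$ restores the original generators, and $\phi_{1,V}\phi_{1,U}=\phi_{1,V+U}$ follows immediately from the additivity of the rule $z\mapsto z+x_1^{r_1}\cdots x_m^{r_m}U$ together with the explicit substitution formula for $\phi_1(y)$. Since $\phi_1(z)-z=x_1^{r_1}\cdots x_m^{r_m}U\neq 0$ in $A[U]$, the map $\phi_1$ is non-trivial, and by construction $k[x_1,\ldots,x_m,t]\subseteq A^{\phi_1}$. The symmetric argument yields $k[x_1,\ldots,x_m,z]\subseteq A^{\phi_2}$, and combining gives $B=k[x_1,\ldots,x_m,z,t]\subseteq\dk(A)$. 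The main (and essentially only) obstacle is the divisibility check of the previous paragraph; everything else is formal verification on generators.
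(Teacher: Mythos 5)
Your proposal is correct and follows essentially the same route as the paper: the same two exponential maps (with the labels of $\phi_1$ and $\phi_2$ swapped), fixing $x_1,\dots,x_m$ and one of $z,t$ while translating the other by $x_1^{r_1}\cdots x_m^{r_m}U$ and adjusting $y$ via the defining relation. Your explicit divisibility check that $\phi(y)\in A[U]$ is the same point the paper records by asserting $\alpha,\beta\in k[x_1,\dots,x_m,z,t,U]$, and the containment $k[x_1,\dots,x_m,t]\subseteq A^{\phi_1}$ (rather than the paper's exact computation of the invariant rings) suffices for the stated lemma.
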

\begin{proof}
Define $\phi_1$ by $\phi_1(x_j) =x_j$ for each $j$, $1\le j\le m$, 
$\phi_1(z) = z$, $\phi_1(t) =t+{x_1}^{r_1} \cdots {x_m}^{r_m}U$ and 
\begin{equation*}
\phi_1(y) = \frac{F(x_1, \dots, x_m,z,t+{x_1}^{r_1} \cdots 
{x_m}^{r_m}U)}{{x_1}^{r_1} \cdots {x_m}^{r_m}} = y + U\alpha(x_1, \dots, x_m, z, t, U)
\end{equation*}
and define $\phi_2$ by $\phi_2(x_j) =x_j$ for each $j$, $1\le j\le m$, 
$\phi_2(t) = t$, $\phi_2(z) =z+{x_1}^{r_1} \cdots {x_m}^{r_m}U$,
\begin{equation*}
\phi_2(y) = \frac{F(x_1, \dots, x_m, z+{x_1}^{r_1} \cdots {x_m}^{r_m}U, t)}
{{x_1}^{r_1} \cdots {x_m}^{r_m}} = y+ U \beta(x_1, \dots, x_m, z, t, U). 
\end{equation*}
Note that $\alpha(x_1, \dots, x_m, z, t, U), \beta(x_1, \dots, x_m, z, t, U) \in k[x_1, \dots, x_m, z, t, U]$
and that $k[x_1, \dots, x_m,z]$  and $k[x_1, \dots, x_m, t]$ are algebraically closed in $A$ 
of transcendence degree $m+1$ over $k$.
It then follows that $\phi_1$ and $\phi_2$ are nontrivial exponential maps on $A$ 
with $A^{\phi_1} = k[x_1, \dots, x_m,z]$ and $A^{\phi_2} = k[x_1, \dots, x_m,t]$.
Hence $k[x_1, \dots, x_m, z, t]\subseteq \dk(A)$.
\end{proof}

We now prove a generalisation of Theorem \ref{dka}.

\begin{prop}\label{main}
Suppose that $f(Z, T):= F(0,\dots, 0, Z, T) \neq 0$ and that $\dk(A) = A$.
Then the following statements hold.
\begin{enumerate}
\item [\rm (i)] There exist $Z_1, T_1 \in k[Z, T]$ and $a_0, a_1 \in k^{[1]}$ such that
$k[Z, T]=k[Z_1, T_1]$ and $f(Z, T) = a_0(Z_1) + a_1(Z_1)T_1$. 
\item [\rm (ii)] Suppose that 
$k[Z, T]/(f) = k^{[1]}$. Then $k[Z, T] = k[f]^{[1]}$. 
\end{enumerate}
\end{prop}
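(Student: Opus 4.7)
The plan is to prove Proposition \ref{main} by induction on $m$, with Theorem \ref{dka} providing the base case $m=1$. For the inductive step, I would combine the filtration machinery (Lemma \ref{fil} and Theorem \ref{MDH}) with a specialization argument that shrinks the number of $X$-variables.

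First, equip $A$ with the admissible $\bZ$-filtration of weights $q_1 = \cdots = q_m = -1$ and $q_Z = q_T = 0$. Because $f(Z,T) \neq 0$, the leading homogeneous summand of $F$ is $F_{u_F} = f(z,t)$ with $u_F = 0$, and this is not divisible by any $x_j$. Lemma \ref{fil} then gives $\gr A \cong D$, where
\[
D := k[X_1, \ldots, X_m, Y, Z, T]/(X_1^{r_1} \cdots X_m^{r_m} Y - f(Z,T)).
\]
By Theorem \ref{MDH}, every non-trivial exponential map on $A$ induces a non-trivial one on $\gr A$ carrying invariants into invariants; combined with $\dk(A) = A$ and the fact that the images of $x_j, y, z, t$ generate $D$ as a $k$-algebra, this yields $\dk(D) = D$. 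Next, specialize $x_m \mapsto 1$ in $D$ to obtain
\[
D' := D/(x_m - 1) \cong k[X_1, \ldots, X_{m-1}, Y, Z, T]/(X_1^{r_1} \cdots X_{m-1}^{r_{m-1}} Y - f(Z,T)),
\]
a ring of the same form but with only $m-1$ of the $X$-variables and, crucially, the \emph{same} defining polynomial $f(Z,T)$. Granted the claim $\dk(D') = D'$, the inductive hypothesis applied to $D'$ delivers conclusions (i) and (ii) for this same $f$, which is precisely what is required for $A$.

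The main obstacle is the descent $\dk(D) = D \Rightarrow \dk(D') = D'$. The analog of Lemma \ref{dkA} immediately gives $k[x_1, \ldots, x_{m-1}, z, t] \subseteq \dk(D')$, so the real task is $\bar y \in \dk(D')$. The canonical exponential maps $\phi_1, \phi_2$ of Lemma \ref{dkA} on $D$ fix $x_m$, so by Lemma \ref{lem1} they descend to non-trivial exponential maps on $D'$, and their invariants modulo $x_m - 1$ sit inside $\dk(D')$. The technical heart is producing enough additional $x_m$-fixing exponential maps on $D$ so that their invariants collectively place $\bar y$ inside $\dk(D')$ after descent; I would attempt this by exploiting the natural action on $D$ of the torus $\{\vec t \in (k^*)^m : \prod t_i^{r_i} = 1\}$, which preserves the defining relation and can be used to twist a given exponential map on $D$ into one compatible with fixing $x_m$.
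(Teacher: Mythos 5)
Your skeleton (induction on $m$ with Theorem \ref{dka} as base case, a filtration replacing $F$ by $f$, then specializing one $x$-variable to drop to $m-1$ variables) is the paper's skeleton, but both of the steps that carry the actual content are gaps. First, the inference ``$\dk(A)=A$ implies $\dk(\gr A)=\gr A$'' is not justified. $\dk(A)=A$ only says that the union of the invariant rings $A^{\phi}$ generates $A$ as a $k$-algebra; the leading-form map $\rho$ is not a ring homomorphism, and Theorem \ref{MDH} only gives $\rho(A^{\phi})\subseteq \gr(A)^{\bar\phi}$ elementwise. In particular nothing tells you that the image of $y$ lies in $\dk(\gr A)$, which is the one element at stake (the $x_j,z,t$ part is free by Lemma \ref{dkA}). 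What is actually needed is a single exponential map $\phi$ with $A^{\phi}\not\subseteq k[x_1,\dots,x_m,z,t]$, an element $g\in A^{\phi}$ outside that subring, and an argument that the leading form of $g$ is divisible by the image of $y$, so that factorial closedness of invariant rings puts that image in the induced invariant ring. With your single weight vector $(-1,\dots,-1)$ even this fails: a monomial $x_1^{i_1}\cdots x_m^{i_m}z^{j_1}t^{j_2}y^{\ell}$ in normal form only has $i_s<r_s$ for \emph{one} $s$, so (e.g.\ $m=2$, $r_1=r_2=2$, monomial $x_2^{100}y$ of degree $-96$) the $y$-monomials of $g$ can be buried below degree $0$ and the leading form of $g$ need not involve $y$ at all. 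This is exactly why the paper filters in two stages, first with weights $(-1,0,\dots,0)$ (where the chosen monomial has $i_1<r_1$ and hence strictly positive degree, forcing $\bar y\mid\bar g$), and only then with $(-1,\dots,-1)$, carrying $\bar y$ --- now known to be an invariant --- forward via $\rho(A^{\phi})\subseteq\gr(A)^{\bar\phi}$.

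Second, the descent from $m$ to $m-1$ variables, which you yourself flag as ``the technical heart,'' is left unproved, and the route you sketch would not work as stated. To apply Lemma \ref{lem1} and land the image of $y$ in the invariants of the quotient, you need a \emph{single} exponential map whose invariant ring contains both $y$ and the particular $x_i$ being specialized; you do not get to decree that this variable is $x_m$, nor that the specialization value is $1$. The paper manufactures such a map by a further graded argument: a homogeneous invariant $h\notin k[\widetilde y,\widetilde z,\widetilde t]$ must exist (else factorial closedness would force all $\widetilde{x_j}$ into the invariants, contradicting non-triviality), some $\widetilde{x_i}$ divides a monomial of $h$, and one more filtration plus factorial closedness puts $\widetilde{x_i}$ itself into the invariants. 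It then passes to the algebraic closure $F$ so that $\hat{x_i}-\lambda$ is prime for every $\lambda\in F^{*}$ and $F$ is infinite, as Lemma \ref{lem1} requires, and specializes at some $\lambda\in F^{*}$ produced by that lemma. Your alternative --- twisting exponential maps by the torus $\{\vec t:\prod t_i^{r_i}=1\}$ to make them fix $x_m$ --- is not developed, and there is no reason a torus twist of a given exponential map should have $x_m$ in its invariant ring; conjugating by an automorphism changes the invariant ring by that same automorphism, which does not create new invariants of the required form. Until these two steps are supplied, the proposal is an outline of the paper's strategy rather than a proof.
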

\begin{proof}
(i) We prove the result by induction on $m$. The result is true for $m= 1$ by Theorem \ref{dka}. 
Suppose that $m >1$. 
Set $B:= k[x_1, x_2, \dots, x_m, z, t] (\subseteq A)$.  Since $\dk(A) = A$, 
there exists an exponential map $\phi$ on $A$ such that $A^{\phi} \nsubseteq B$. 
Let $g \in A^{\phi} \setminus B$.  Since $g \notin B$,
by equations (\ref{eqn}),
there exists a monomial in $g$ which is of the form 
${x_1}^{i_1} \dots {x_m}^{i_m} z^{j_1}t^{j_2}y^{\ell}$
where $\ell >0$ and $i_{s}< r_s$ for some $s$, $1\le s \le m$.
Without loss of generality, we may assume that $s=1$. 

Consider the proper $\bZ$-filtration on $A$
with respect to $(-1, 0, \dots, 0) \in \bZ^m$
and let $\overline{A}$ denote the induced graded ring. 
For $h \in A$, let $\bar{h}$ denote the image of $h$ in $\overline{A}$.
Since $A$ is an integral domain, we have
$F(0, x_2, \dots, x_m, z, t) \neq 0$ and so, 
$F(x_1, \dots, x_m, z, t)_{u_F}= 
F(0, x_2, \dots, {x_m}, {z}, {t})= G$ (say). 
Since $f(Z, T) \neq 0$,  by Lemma \ref{fil}, we have 
$$
\overline{A} \cong 
k[X_1, \dots, X_{m},Y,Z,T]/({X_1}^{r_1} \cdots {X_m}^{r_m} Y - G).
$$
By Theorem \ref{MDH}, $\phi$ induces a non-trivial homogeneous
exponential map $\bar{\phi}$ on $\overline{A}$ such that $\bar{g} \in \overline{A}^{\bar{\phi}}$.
By the choice of $g$ and the filtration defined on $A$, 
we have $\bar{y} \mid \bar{g}$. 
Since $\overline{A}^{\bar{\phi}}$ is factorially closed in $\overline{A}$ 
(cf. \cite[Lemma 2.1 (i)]{G}),
it follows that $\bar{y} \in \overline{A}^{\bar{\phi}}$.

We now consider the proper $\bZ$-filtration on $\overline{A}$ 
with respect to $(-1, -1, \dots, -1) \in \bZ^m$
and let $\widetilde{A}$ denote the induced graded ring. 
For $\bar{h} \in \overline{A}$, let $\widetilde{h}$ denote 
the image of $\bar{h}$ in $\widetilde{A}$.
Since $f(Z, T) \neq 0$, we have 
${G}_{u_G}= f(z, t)$
and hence by Lemma \ref{fil},  
$$
\widetilde{A} \cong k[X_1, \dots, X_{m},Y,Z,T]/
({X_1}^{r_1} \cdots {X_m}^{r_m} Y - f(Z, T)). 
$$
Again by Theorem \ref{MDH}, $\bar{\phi}$ induces a non-trivial homogeneous
exponential map $\widetilde{\phi}$ on $\widetilde{A}$ such that $\widetilde{y} \in \widetilde{A}^{\widetilde{\phi}}$.
Since $\td_k \widetilde{A}^{\widetilde{\phi}} = m+1$, there exist $m$ algebraically independent elements
in $\widetilde{A}^{\widetilde{\phi}}$ over $k[\widetilde{y}]$.  

If $\widetilde{A}^{\widetilde{\phi}} \subseteq k[\widetilde{y},\widetilde{z},\widetilde{t}]$, 
then since $m \ge 2$ and $\widetilde{A}^{\widetilde{\phi}}$ is algebraically closed in $\widetilde{A}$, 
we have $\widetilde{A}^{\widetilde{\phi}} = k[\widetilde{y},\widetilde{z},\widetilde{t}]$. 
Since $\widetilde{x_1}^{r_1} \cdots \widetilde{x_m}^{r_m} \widetilde{y}  (= f(\widetilde{z}, \widetilde{t})) \in  
k[\widetilde{y},\widetilde{z},\widetilde{t}]$, 
we have $\widetilde{x_1}, \dots, \widetilde{x_m}, \widetilde{y} \in \widetilde{A}^{\widetilde{\phi}}$ as 
$\widetilde{A}^{\widetilde{\phi}}$ is factorially closed in $\widetilde{A}$. This would contradict
the fact that $\widetilde{\phi}$ is non-trivial. Thus, there exists an homogeneous element 
$h \in \widetilde{A}^{\widetilde{\phi}} \setminus k[\widetilde{y},\widetilde{z},\widetilde{t}]$. 
Hence,  $h$ contains a monomial which is divisible by $\widetilde{x_i}$ for some $i$, $1\le i\le m$. Without loss of generality
we may assume that $\widetilde{x_2 }$ divides a monomial of $h$.  

Now again consider the proper $\bZ$-filtration on $\widetilde{A}$ 
with respect to the $m$-tuple $(0, 1, 0, \dots, 0) \in \bZ^{m}$
defined as in Lemma \ref{fil} and the induced graded ring $\gr{\widetilde{A}}$ of $\widetilde{A}$.  
Then $\gr{\widetilde{A}} \cong \widetilde{A}$. 
For $\widetilde{a} \in\widetilde{A}$, 
let $\gr(a)$ denote the image of $\widetilde{a}$ in $\gr{\widetilde{A}}$. 
Then $\gr(x_2) \mid \gr(h)$ in $\gr{A}$. Again by Theorem \ref{MDH}, 
$\widetilde{\phi}$ induces a non-trivial homogeneous
exponential map $\gr{\phi}$ on $\gr{\widetilde{A}}$ such that 
$\gr(y)$ and $\gr(h) \in {\gr{A}}^{\gr{\phi}}$.
Since $\gr(x_2) \mid \gr(h)$ in $\gr{A}$, we have $\gr(x_2) \in {\gr{A}}^{\gr{\phi}}$.  

Let $F$ be an algebraic closure of the field $k$. 
Then $\gr{\phi}$ induces a non-trivial exponential map $\psi$ on 
$$
E=\gr{\widetilde{A}} \otimes_k F\cong \dfrac{F[X_1,\dots, X_m, Y, Z, T]}
{({X_1}^{r_1} \cdots {X_m}^{r_m} Y - f(Z, T))} = F[\hat{x_1}, \dots, \hat{x_m}, \hat{z}, \hat{t}, \hat{y}],
$$
such that $F[\hat{y}, \hat{x_2}]\subseteq E^{{\psi}}$. 
Since $\hat{x_2} -\lambda$ is a prime element in $E$ 
for every $\lambda \in F^*$, by Lemma \ref{lem1}, 
$\psi$ induces a non-trivial exponential map on  
$$
E/ (\hat{x_2} -\lambda)E (\cong \dfrac{F[X_1, X_3, \dots, X_{m},Y,Z,T]}
{(\lambda^{r_2}{X_1}^{r_1}{X_{3}}^{r_3}\cdots{X_m}^{r_m} Y -f(Z, T))}
$$
for some $\lambda \in F^*$ 
such that the image of $\hat{y}$ lies in the ring of invariants. 
Therefore, using Lemma \ref{dkA}, 
we have $\dk(E/ (\hat{x_2} -\lambda)E)= E/ (\hat{x_2} -\lambda)E$. 
We are thus through by induction on $m$. 

(ii) Suppose that 
$f(Z, T)$ is a line in $k[Z, T]$. 
Then $A/(x_1, \dots, x_m) = k[Y, Z, T]/(f(Z, T)) \cong k^{[2]}$ 
and hence $(A/(x_1, \dots, x_m))^*= k^*$. By (i) above,
there exist $Z_1, T_1 \in k[Z, T]$ and $a_0, a_1 \in k^{[1]}$ such that
$k[Z, T]=k[Z_1, T_1]$ and $f(Z, T) = a_0(Z_1) + a_1(Z_1)T_1$. 
If $a_1(Z_1)=0$, then $f(Z, T)= a_0(Z_1)$ must be a linear polynomial in $Z_1$ 
and hence a variable in $k[Z, T]$. 
Now suppose $a_1(Z_1) \neq 0$. As $f(Z, T)$ is irreducible in $k[Z, T]$,
$a_0(Z_1)$ and $a_1(Z_1)$ are coprime in $k[Z_1]$.
Hence $A/(x_1, \dots, x_m) \cong k[Z_1, \frac{1}{a_1(Z_1)}]^{[1]}$ 
and since $(A/(x_1, \dots, x_m))^*= k^*$, we have  $a_1(Z_1) \in k^*$. 
This again implies that $f(Z, T)$ is a variable in $k[Z, T]$.
\end{proof}

We now deduce a result for a field of positive characteristic. 

\begin{cor}\label{ce}
Suppose that ch. $k >0$ and that 
$f(Z,T) \in k[Z, T]$ is a non-trivial line in $k[Z, T]$. 
Then, $A \ncong k^{[m+2]}$. 
\end{cor}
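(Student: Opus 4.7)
The plan is to argue by contradiction and reduce the corollary to a direct application of the machinery already assembled. Suppose to the contrary that $A \cong k^{[m+2]}$. Since $m+2 \ge 3 > 1$, Lemma \ref{r1} immediately yields $\dk(A) = A$. The hypothesis that $f(Z,T)$ is a line forces $f \neq 0$ (a line is in particular an irreducible nonzero element) and gives $k[Z,T]/(f) \cong k^{[1]}$, so both standing hypotheses of Proposition \ref{main} are in place.

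Next I would invoke part (ii) of Proposition \ref{main} to conclude that $k[Z,T] = k[f]^{[1]}$. This directly contradicts the assumption that $f$ is a \emph{non-trivial} line, since by the definition recalled in Section \ref{pri} non-triviality is precisely the statement $k[Z,T] \neq k[f]^{[1]}$. Hence $A \ncong k^{[m+2]}$, as desired.

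The role of the positive-characteristic hypothesis is not in the logic of the contradiction but in making the statement non-vacuous: in characteristic zero the Abhyankar--Moh--Suzuki theorem guarantees that every line in $k[Z,T]$ is trivial, so no $f$ satisfying the hypothesis exists. In positive characteristic, by contrast, classical non-trivial lines (for instance Segre--Nagata type examples) are available, and the corollary promotes each of them to a concrete $A$ which is not a polynomial ring. Since the entire technical burden is carried by Proposition \ref{main}, no further obstacle is expected at the level of the corollary itself; verifying that the hypotheses of Proposition \ref{main} are met is the only step that requires any checking, and that is immediate from the definition of a line together with Lemma \ref{r1}.
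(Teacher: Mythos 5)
Your proposal is correct and follows exactly the paper's own argument: assume $A \cong k^{[m+2]}$, deduce $\dk(A)=A$ from Lemma \ref{r1}, apply Proposition \ref{main}(ii) to get $k[Z,T]=k[f]^{[1]}$, and contradict the non-triviality of the line $f$. Your additional remarks on verifying the hypotheses of Proposition \ref{main} and on why positive characteristic is needed for non-vacuousness are accurate but not part of the paper's (shorter) proof.
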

\begin{proof}
Suppose, if possible that $A \cong k^{[m+2]}$. Then, by Lemma \ref{r1}, $\dk(A)= A$. Therefore,
by Proposition \ref{main}(ii), $k[Z, T]= k[f]^{[1]}$. This contradicts the given hypothesis. 
\end{proof}

\begin{thm}\label{sta}
Let $k$ be a field of any characteristic and
$A$ an integral domain defined by 
$$
A= k[X_1, \dots, X_{m},Y,Z,T]/({X_1}^{r_1} \cdots {X_m}^{r_m} Y - f(Z, T)),
$$
where $r_i >1$ for each $i$. 
Suppose that $f(Z, T)$ is a line in $k[Z, T]$. Then 
$$
A^{[1]}=k[X_1, \dots, X_m]^{[3]} \cong k^{[m+3]}.
$$
\end{thm}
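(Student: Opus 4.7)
My plan is to establish the stable isomorphism by a direct generalization of the approach used for the Asanuma threefold (the $m=1$ case treated by the author in \cite{G}). The strategy is to exhibit an explicit isomorphism between $A[W]$ and a polynomial ring $R^{[3]}$ over $R := k[X_1,\dots,X_m]$, using the hypothesis that $f$ is a line to supply the needed combinatorial structure.

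First I would record the line data: since $k[Z,T]/(f) \cong k^{[1]}$, there is $g \in k[Z,T]$ with $k[Z,T]/(f) = k[\bar g]$, which yields a residual parametrization $Z = P_1(g) + f\alpha_1$ and $T = P_2(g) + f\alpha_2$ for some $P_i \in k^{[1]}$ and $\alpha_i \in k[Z,T]$. Next, iteratively apply a Danielewski-type substitution to $A[W]$ that decreases the exponents $r_i$ of $\pi := X_1^{r_1}\cdots X_m^{r_m}$ one at a time, producing, for any $j$ with $r_j \ge 1$, an isomorphism
$$A[W]\;\cong\;A'[W'],\qquad A' := k[X_1,\dots,X_m,Y,Z,T]\bigl/\bigl((\pi/X_j)\,Y - f(Z,T)\bigr),$$
realized by sending $W \mapsto Y$, $Z \mapsto Z + (\pi/X_j)W\alpha_1$, $T \mapsto T + (\pi/X_j)W\alpha_2$, and $Y$ to $X_jY$ plus a Hasse-derivative Taylor correction chosen so that the defining relation is preserved. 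After $r_1 + \cdots + r_m$ iterations, $\pi$ is reduced to $1$, and the resulting ring is $R[Y,Z,T]/(Y - f) \cong R[Z,T] \cong R^{[2]}$. Hence $A[W] \cong R[Z,T][W^{\mathrm{fin}}] \cong R^{[3]} \cong k^{[m+3]}$.

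The main obstacle is verifying that each Danielewski-type substitution is bijective, not merely a well-defined ring homomorphism. Well-definedness is automatic from the Hasse-derivative construction, but surjectivity requires that the generators of the target ring be polynomially recoverable from the image. This is where the line hypothesis becomes essential: the residual parametrization $(P_i,\alpha_i)$ furnishes an inverse substitution explicitly, and the smoothness of $V(f) \cong \A^1$ (a consequence of $f$ being a line, which also makes the total space $\Spec A$ smooth) guarantees that the inverse does not require division by non-units. Without the line condition, the analogous reduction can fail, which is consistent with the existence of integral domains $A$ of this form that are not even stably polynomial rings.
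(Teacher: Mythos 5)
Your opening move --- extracting a coordinate $h$ on $V(f)$ and the residual parametrization $Z = P(h) + fP_1(Z,T)$, $T = Q(h) + fQ_1(Z,T)$ --- matches the paper's setup exactly, but from there you diverge into an iterative scheme whose central step is never established, and that is a genuine gap. The claimed one-exponent-at-a-time stable isomorphism $A[W] \cong A'[W']$, with $\pi = X_1^{r_1}\cdots X_m^{r_m}$ replaced by $\pi/X_j$, carries the entire content of the theorem (iterating it is the trivial part), and your description of the map does not define a ring homomorphism: you send $W \mapsto Y$ and simultaneously $Y \mapsto X_jY$ plus an unspecified ``Hasse-derivative correction,'' without saying which direction the map goes, what happens to $W'$, or how the images of $Z$ and $T$ can involve $W$ after $W$ has already been sent to $Y$. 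You yourself identify bijectivity as ``the main obstacle'' and then dispose of it by asserting that the parametrization ``furnishes an inverse substitution explicitly'' --- but that inverse is precisely what must be written down and verified, and nothing in the proposal does so. The appeal to smoothness of $\Sp A$ is a red herring; no such geometric input is used or needed, and it cannot substitute for checking that specific elements are polynomially recoverable.

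For comparison, the paper's proof is a single global change of variables, not an induction on exponents: one sets $W_1 = \pi W + h(Z,T)$, $Z_1 = (Z - P(W_1))/\pi$, $T_1 = (T - Q(W_1))/\pi$, and verifies both inclusions between $A[W]$ and $k[X_1,\dots,X_m,Z_1,T_1,W_1]$ by direct computation, using the two identities $f(P(U),Q(U)) = 0$ and $h(P(U),Q(U)) = U$ that encode the line hypothesis (for instance $Z_1 = P_1(Z,T)\,y + \gamma$ with $\gamma \in A[W]$, and $y, W$ are recovered as polynomials in $X_1,\dots,X_m,Z_1,T_1,W_1$). Note that an honest proof of your inductive step for a general exponent vector would require exactly this kind of explicit two-sided verification, so the iteration buys you nothing; the repair is to abandon the induction and carry out the one-shot substitution.
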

\begin{proof}
Let $h \in k[Z, T]$ be such that $k[Z, T]= k[h] +(f)k[Z, T]$.
Let $P , Q \in k^{[1]}$ be such that
$Z= P(h) + fP_1(Z, T)$ and 
$T = Q(h) + fQ_1(Z, T)$ for some $P_1, Q_1 \in k[Z, T]$. 
Let $W$ be an indeterminate over $A$.
Set 
\begin{eqnarray*}
W_1 : &=& {X_1}^{r_1} \cdots {X_m}^{r_m} W + h(Z, T) \\
Z_1:&=& (Z- P(W_1))/{X_1}^{r_1} \cdots {X_m}^{r_m}\\
T_1: &=& (T- Q(W_1))/{X_1}^{r_1} \cdots {X_m}^{r_m}. 
\end{eqnarray*}
Let $y$ be the image of $Y$ in $A$. 
We show that $A[W]= k[X_1, \dots, X_m, Z_1,T_1, W_1]$.
Set $B: = k[X_1,\dots, X_m, Z_1,T_1, W_1]$. We have 
\begin{eqnarray*}
 Z &=& P(W_1)+ {X_1}^{r_1} \cdots {X_m}^{r_m}Z_1, \\
 T &=& Q(W_1)+ {X_1}^{r_1} \cdots {X_m}^{r_m}T_1,\\
 y &=& \frac{f(Z,T)}{{X_1}^{r_1} \cdots {X_m}^{r_m}} = 
\frac{f({X_1}^{r_1} \cdots {X_m}^{r_m}Z_1 + P(W_1),{X_1}^{r_1} \cdots {X_m}^{r_m} T_1 + Q(W_1))}{{X_1}^{r_1} \cdots {X_m}^{r_m}} \\
   &=& \frac{f(P(W_1), Q(W_1))}{{X_1}^{r_1} \cdots {X_m}^{r_m}} + \alpha(X_1,\dots, X_m, Z_1, T_1, W_1),\\
 W &=& \frac{W_1- h(Z, T)}{{X_1}^{r_1} \cdots {X_m}^{r_m}} = \frac{W_1- h({X_1}^{r_1} \cdots {X_m}^{r_m}Z_1 + P(W_1), 
{X_1}^{r_1} \cdots {X_m}^{r_m}T_1 + Q(W_1))}{{X_1}^{r_1} \cdots {X_m}^{r_m}} \\
   &=& \frac{W_1- h(P(W_1), Q(W_1))}{{X_1}^{r_1} \cdots {X_m}^{r_m}} + \beta(X_1, \dots, X_m, Z_1, T_1, W_1)
\end{eqnarray*}
for some $\alpha , \beta \in B$. 
Since $f(P(W_1), Q(W_1)) = 0$ and $h(P(W_1), Q(W_1)) = W_1$, we see that $y, W \in B$.
Hence, $A[W] \subseteq B$.  
We now show that $B \subseteq A[W]$. We have,
\begin{eqnarray*}
Z_1 &= & \frac{Z- P(W_1)}{{X_1}^{r_1} \cdots {X_m}^{r_m}} 
= \frac{Z- P({X_1}^{r_1} \cdots {X_m}^{r_m} W + h(Z, T))}{{X_1}^{r_1} \cdots {X_m}^{r_m}} \\
&=& \frac{Z-P(h(Z, T))}{{X_1}^{r_1} \cdots {X_m}^{r_m}} + \gamma(X_1, \dots, X_m, Z, T, W)=
\frac{f(Z, T)P_1(Z, T)}{{X_1}^{r_1} \cdots {X_m}^{r_m}}+\gamma(X_1,\dots, X_m, Z, T, W)\\
&=& P_1(Z, T) y + \gamma(X_1,\dots, X_m, Z, T, W),
\end{eqnarray*}
and
\begin{eqnarray*}
T_1 &= & \frac{T- Q(W_1)}{{X_1}^{r_1} \cdots {X_m}^{r_m}} = 
\frac{T- Q({X_1}^{r_1} \cdots {X_m}^{r_m}W + h(Z, T))}{{X_1}^{r_1} \cdots {X_m}^{r_m}} \\
&=& \frac{T-Q(h(Z, T))}{{X_1}^{r_1} \cdots {X_m}^{r_m}} + \delta(X_1,\dots, X_m, Z, T, W)= 
\frac{f(Z, T)Q_1(Z, T)}{{X_1}^{r_1} \cdots {X_m}^{r_m}}+\delta(X_1,\dots, X_m, Z, T, W)\\
&=& Q_1(Z, T) y + \delta(X_1,\dots, X_m, Z, T, W)
\end{eqnarray*}
for some $\gamma, \delta \in A[W]$. 
Thus, $Z_1, T_1 \in A[W]$. Hence $B \subseteq A[W]$. 
Since $B = k[X_1, \dots, X_m]^{[3]}$,  the result follows.
\end{proof}

\begin{thm}\label{zar}
 When $k$ is a field of positive characteristic, 
Zariski's Cancellation Conjecture does not hold for the affine $n$-space
$\A^n_k$ for any $n \ge 3$. 
\end{thm}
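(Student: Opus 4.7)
The plan is to assemble the pieces already developed in this paper by producing, for each $n \geq 3$, an explicit affine $k$-algebra $A$ such that $A^{[1]} \cong k^{[n+1]}$ but $A \ncong k^{[n]}$. Concretely, I would set $m := n - 2 \geq 1$, choose integers $r_1, \dots, r_m > 1$, pick a non-trivial line $f(Z, T) \in k[Z, T]$, and define
$$
A := k[X_1, \dots, X_m, Y, Z, T]/({X_1}^{r_1} \cdots {X_m}^{r_m} Y - f(Z, T)).
$$

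The first step is to invoke the hypothesis ch. $k > 0$ to guarantee the existence of a non-trivial line $f(Z, T)$. This is the only place where positive characteristic is essential: by the theorem of Abhyankar-Moh-Suzuki no such line exists in characteristic zero, but in positive characteristic one has classical examples (such as Segre's line and its relatives in $k[Z, T]$) that are lines without being variables.

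Next, I would apply Theorem \ref{sta} to this choice of $A$. The hypothesis that $f(Z, T)$ is a line in $k[Z, T]$ is in force, so the theorem yields
$$
A^{[1]} \cong k[X_1, \dots, X_m]^{[3]} \cong k^{[m+3]} = k^{[n+1]}.
$$
Hence, setting $\V := \Sp(A)$, we have $\V \times \A^1_k \cong \A^{n+1}_k$.

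Finally, Corollary \ref{ce} applies because $f(Z, T)$ is a non-trivial line and ch. $k > 0$; it gives $A \ncong k^{[m+2]} = k^{[n]}$. Thus $\V$ is a variety witnessing the failure of cancellation for $\A^n_k$, for this $n$. All the technical heavy-lifting---the inductive descent through the iterated $\bZ$-gradings and the reduction modulo $\hat{x_2} - \lambda$ inside Proposition \ref{main}, together with the explicit change of variables in Theorem \ref{sta}---has already been carried out, so at this final step the only non-trivial external input is the classical existence of a non-trivial line in positive characteristic.
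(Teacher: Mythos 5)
Your proposal is correct and follows the paper's own argument essentially verbatim: take $m = n-2$, use the existence of a non-trivial line in positive characteristic (the paper uses Nagata's line $Z^{p^2}+T+T^{qp}$ with $q$ a prime different from $p$), and combine Theorem \ref{sta} with Corollary \ref{ce}. The only difference is cosmetic (your attribution of the example and the explicit bookkeeping of $m=n-2$, which the paper leaves implicit).
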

\begin{proof}
There exists non-trivial line in $k[Z, T]$ when ch $k$ $= p>0$. For instance, we may take the 
Nagata's line $f(Z, T)= Z^{p^2} + T + T^{qp}$, where $q$ is a prime other than $p$.
The result now follows from Theorem \ref{sta} and Corollary \ref{ce}.  
\end{proof}

\small{

}
\end{document}